\theoremstyle{plain}
\newtheorem{definition}{Definition}
\newtheorem{proposition}{Proposition}
\newtheorem{theorem}[proposition]{Theorem}
\newtheorem{condition}[proposition]{Condition}
\newtheorem{lemma}[proposition]{Lemma}
\newtheorem*{proposition*}{Proposition}
\newtheorem*{theorem*}{Theorem}
\newtheorem*{corollary*}{Corollary}
\newtheorem*{lemma*}{Lemma}
\newtheorem*{remark*}{Remark}
\newtheorem*{example*}{Example}
\newcommand{\Z}{\mathbb{Z}}
\newcommand{\Q}{\mathbb{Q}}
\begin{document}

\title{Open Gromov-Witten theory on Calabi-Yau three-folds I}

\author{Vito Iacovino}


\email{vito.iacovino@gmail.com}

\date{version: \today}


\begin{abstract}
We propose a general theory of the Open Gromov-Witten invariant on Calabi-Yau three-folds. 
We introduce the moduli space of multi-curves and show how it leads to invariants. Our construction is based on an idea of Witten.

In the special case that each connected component of the Lagrangian submanifold has the rational homology of a sphere we define rational numbers $F_{g,h}$ for each genus $g$ and $h$ boundary components.  
\end{abstract}

\maketitle

\section{Introduction}

Let $M$ be a Calabi-Yau three-fold and let $L$ be a special Lagrangian submanifold of $M$. 
The open Gromov-Witten invariants of the pair $(M,L)$ are supposed to count the pseudo-holomorphic bordered curves of $M$ with boundary mapped into $L$, 
or equivalently the Euler characteristic of the moduli space of pseudo-holomorphic bordered curves. 
Mathematicians have been unable to give a general construction of these invariants basically because the moduli space of pseudo holomorphic bordered curves has codimension one boundary, making its Euler characteristic ill defined. 
However a definition was given assuming particular symmetries of the pair $(M,L)$ (see \cite{L}, \cite{So}).

On the other hand, Witten in \cite{W} defines (at the physical level of rigor) the open topological string partition function for a pair $(X,L)$. 
It is well known that the closed topological string partition function can be computed in terms of closed Gromov-Witten invariants.
By analogy, the result of \cite{W} predicts the existence of Open Gromov-Witten invariants. 
These invariants have also been computed in string theory in many examples. 

In this paper we give a mathematical construction of the invariant of \cite{W} and therefore we provide a mathematical definition of Open Gromov-Witten invariants. 

There are two phenomena that make the boundary of the moduli space of pseudo holomorphic disks of codimension one: 
the bubbling off of disks and the bubbling off of spheres from a constant disks. 

The bubbling off of spheres from constant disks is less harmful since it can happen only in a discrete set points of the space of parameters 
that we need to define the moduli space of disks (such as the space of compatible almost complex structures). 
To deal with this problem we assume that $L$ is homologically trivial (we do not assume that $L$ is connected). 
This will allow us to add correction terms that compensate the bubbling off of spheres from constant disks.
This condition is also natural in string theory since it implies that the topological charge of the D-brane is zero. 
It was also used in mathematics independently by Joyce.

The problem of the splitting of the curves is a much more delicate issue since it happens along the entire space of parameters, 
making meaningless the count of the number of curves. 
In fact, the number of open curves strongly depends on the perturbation of the pseudo-holomorphic equation. 

The key to solve this problem is suggested by the result of \cite{W}. 
Witten argues that the open topological string partition function receives contributions also from  degenerate or partialy degenerate curves. 
These objects are made by usual curves joined by infinite thin strips living on $L$. 
Witten proposes that this contribution can be computed in terms of Chern-Simons integrals. 
For example, in the case of the cotangent bundle $M=T^*L$, where there are no non-constant holomorphic curves with boundary mapped into the zero section, 
the open topological string is equivalent to Chern-Simons theory on $L$. The degenerate curves correspond to Feynman graphs of the Chern-Simons theory. 
For more general Calabi-Yau manifolds, where pseudo-holomorphic curves can be present, 
Witten computes the contribution of degenerate curves using Wilson loop integrals associated to the boundary of the curves (see formula $(4.50)$ of \cite{W})). 


From the work of Witten it follows that open topological string theory does not simply count open curves, but it counts what we define as multi-curves, 
in analogy with the physical terminology multi-instantons. 
Roughly, a multi-disk is described by a tree of disks. 
The moduli space of multi-curves provides the natural mathematical setting for the Feynman expansion of the Lagrangian $(4.50)$ of \cite{W} 
and therefore for the general solution of the problem of splitting of curves. 
We give a mathematical regularization of the Feynman expansion of the Lagrangian $(4.50)$ of \cite{W}. 

The usual virtual computation applied to the space of multi-curves leads to a new algebraic topological object. 
We call this object system of singular chains. 
This object encodes all the informations for the curves counting.

In the particular case that $L$ has the rational homology of a sphere, 
we apply this general formalism to show that it naturally leads to Open Gromov-Witten invariants. 
The contribution of multi-disks to the Open Gromov-Witten invariant is computed as generalization of linking numbers of the boundary components of the disk vertices. 
Let us explain in this particular case how the problem of the bubbling off of disks is solved. 
Consider for example a one parameter family of compatible almost complex structures of $M$. Suppose that a disk $A$ splits into two disks $B$ and $C$. 
The contribution of the $2$-disk $B,C$ is given by the linking number of the boundary knots. 
When the disk $A$ disappears the boundary knots of the disks $B$ and $C$ cross and therefore the linking number of $B,C$ jumps. 
Similarly, disappearing of an $n$-disk is compensated by the discontinuity in the $m$-disk contribution for $m>n$.    

In \cite{frame}, we apply the analysis of this paper to non-compact situations. This includes a mathematical definition of the invariants arising in Large $N$ duality.   
   
In the sequel \cite{I1} we consider the case that $H_1(L,\Q) \neq 0$. 
We construct the Open Gromov-Witten potential $S$ as a homotopy class of solutions of the Master equation in the ring of functions on $H^*(L, \Q)$ with coefficients in the Novikov ring. 
This is the effective action of the Open Topological String of \cite{W}. 
$S$ is defined up to master homotopy, unique up to equivalence. The evaluation of $S$ on its critical points leads to numerical invariants.

The paper is organized as follows. 
In section two we define the Kuranishi space of multi-disks. 
The orientation of the moduli space of disks induces an orientation on the moduli space of multi-disks. 
We show how the boundary faces of the space of multi-disks can be identified with some subspaces of different components.

In section three we define the notion of systems of chains. This is the main algebraic topological object in which we will be interested.
 
In section four we introduce the constrain on the perturbation of the Kuranishi structure of multi-disks. 
With this constrain, we show that the evaluation on the punctures of multi-disks leads to a system of chains well defined up to homotopy. 
This procedure is a minor adaptation of the procedure developed in \cite{FO}. 

In section five we discuss the gluing property. This property assure that the system of chains is compatible with the cut of a multi-disk along an internal edge. 

In section five we specialize to the case that each connected component of $L$ has the rational homology of a sphere. 

In section six we consider the higher genus case. 
Most of what we have said for multi-disks can be adapted to multi-curves. 
In the case that each connected component of $L$ has the rational homology of a sphere 
we define the Open Gromov-Witten invariants $F_{g,h} \in \Q$ counting multi-curves of genus $g$ and $h$ boundary components.

{\bf Acknowledgements. } We are grateful to D. Joyce for many useful correspondences. We also thank B. Siebert and J. Latchev for their comments on a preview version.

\section{Multi-disks}

Let $M$ be a Calabi-Yau three-fold and let $L$ a Maslov index zero Lagrangian submanifold of $M$. 
In this section we define the moduli space of multi-disks with boundary mapped into $L$. 

Given a ribbon tree $T$ we denote by $V(T)$ the set of vertices of $T$ and by $E(T)$ the set of edges of $T$. 
$H(T)$ is the set of edges of $T$ with an assigned orientation, or equivalently the set of half-edges of $T$. 
Thus for each internal edge there correspond two elements of $H(T)$, and for each external edge there is one element of $H(T)$, the oriented edge starting in the unique vertex. 
For $v \in V(T)$, $H(v)$ denotes the set of oriented edges starting from $v$.

\begin{definition} A decorated tree is a ribbon tree $T$ with a relative homology class $A_v \in H_2(M,L)$ assigned to each vertex $v$. 
If $A_v \neq 0$ the vertex $v$ can have arbitrary valence. If $A_v =0$ the valence of $v$ has to be at least $3$. 

The relative homology class of $T$ is by definition the sum of the homology classes of its vertices $A = \sum_{v \in V(T)} A_v$.
We denote by $\mathcal{T}$ the set of decorated trees and by $\mathcal{T}(A)$ the set of decorated trees in the homology class $A$.
\end{definition}

Fix a compatible almost-complex structure $J$ on $M$. 
Let $ \overline{\mathcal{M}}_n(A)$ be the moduli space of pseudo-holomorphic disks with $n$ marked points with a fixed cyclic order in the relative homology class $A \in H_2(X,L)$ 
(this is usualy denoted the main component in \cite{FO3}). 
By Chapter $7$ of \cite{FO3}, $\overline{\mathcal{M}}_n(A)$ can be endowed with a weakly submersive Kuranishi structure with corners.

For each $v \in V(T)$ denote by $\overline{\mathcal{M}}_v$ a copy of $\overline{\mathcal{M}}_{H(v)}(A_v)$ 
where we consider the boundary marked points labeled with the half edges $H(v)$ starting in $v$, respecting the cyclic order of the ribbon structure.

The moduli space of $T$-multi-disks $ \overline{\mathcal{M}}_T$ is given by
$$ \overline{\mathcal{M}}_T= \left( \prod_{v \in V(T)} \overline{\mathcal{M}}_v \right) / \text{Aut}(T). $$
The space $\overline{\mathcal{M}}_T$ has a natural Kuranishi structure. Moreover this Kuranishi structure has a tangent space.

\subsection{Boundary}

Fix a decorated tree $T$. For each $v \in V(T)$ the boundary of $\overline{\mathcal{M}}_v$ can be decomposed into different components 
\begin{equation} \label{bubbling}
\partial (\overline{\mathcal{M}}_{H(v)} (A_v) ) = \left( \bigsqcup_{\substack{H_1 \cup H_2=H(v) \\ A_1+A_2=A }} \overline{\mathcal{M}}_{\{H_1 \cup * \}} (A_1) \times_L 
\overline { \mathcal{M}}_{ \{ H_2 \cup * \}}(A_2) \right) / \Z_2 .
\end{equation}
Here the union is taken over the decompositions $H= H_1 \sqcup H_2  $ preserving the cyclic order. The marked point $*$ is the singular point associated to the boundary face. 
The fiber product in the left is defined using the evaluation map in $*$.  

In the case that $H(v)$ is empty we need to add an extra term to (\ref{bubbling}):
\begin{equation} \label{bubbling0}
\partial \overline{\mathcal{M}}_0 = (\overline{\mathcal{M}}_1 \times_L \overline{\mathcal{M}}_1) / \Z_2 \sqcup \overline{\mathcal{M}}_{0,1} \times_M L . 
\end{equation} 
Here $\overline{\mathcal{M}}_{0,1}$ is the moduli space of $J$-pseudo-holomorphic spheres with one marked point.
The last term of (\ref{bubbling0}) comes from the bubbling of the spheres from constant disks. It has been discussed in section $7.4$ of \cite{FO3}. 
These are the boundary nodes of type E in Definition $3.4$ of \cite{L}.

For each $v \in V(T)$ define
\begin{equation} \label{bvertex}
\partial_v \overline{\mathcal{M}}_T =  \left( \partial  \overline{\mathcal{M}} _v \times  \prod_{v' \neq v} \overline{\mathcal{M}}_{v'}  \right) / \text{Aut}(T,v) .
\end{equation}
The boundary of $\overline{\mathcal{M}}_T  $ can be subdivided into the Kuranishi spaces $\partial_v \overline{\mathcal{M}}_T $
$$  \partial \overline{\mathcal{M}}_T = \bigsqcup_{v \in V(T)} \partial_v \overline{\mathcal{M}}_T .$$

For each $e \in E(T)$ internal edge let
\begin{equation} \label{bedge}
\partial_e \overline{\mathcal{M}}_T  = \left( (\text{ev}_e)^{-1}(\Delta) \right) /  \text{Aut}(T,e)   
\end{equation}
where $\Delta \subset L \times L$ is the diagonal and $ \text{ev}_e : \prod_{v \in V(T)} \overline{\mathcal{M}}_v \rightarrow  L \times L $ is the evaluation on the punctures associated to $e$. 
Observe that $\text{ev}_e$ is a submersion since the Kuranishi structure on the disks was taken weakly submersive. 
This implies that $\partial_e \overline{\mathcal{M}}_T $ has a natural Kuranishi structure. 

The definition (\ref{bedge}) can be generalized to more edges. For $e_1,e_2$ internal edges of $T$ we define 
$$  \partial_{e_1, e_2} \overline{\mathcal{M}}_T  = \left( (\text{ev}_{e_1})^{-1}(\Delta) \cap (\text{ev}_{e_2})^{-1}(\Delta) \right) /  \text{Aut}(T,e_1,e_2)  .$$
This is a Kuanishi space again because the Kuranishi structures on the disks was taken weakly submersive. 
We also have 
$$\partial_{e_1, e_2} \overline{\mathcal{M}}_T = \partial_{e_1} \overline{\mathcal{M}}_T  \sqcap \partial_{e_2} \overline{\mathcal{M}}_T .$$   
The intersections between more of spaces (\ref{bedge}) can be discussed in a similar way.

\begin{definition} Let $e \in E(T)$ be an internal edge attached to the vertices $v_1, v_2$. 
The decorated tree $T/e$ is the tree obtained by contracting $e$ to a vertex $e/e$. 
The relative homology class of the vertex $e/e$ is the sum of the homology classes of the vertices $v_1$ and $v_2$, $ A_{e/e}  = A_{v_1} + A_{v_2} $.
\end{definition}
 
The Kuranishi space $\partial_v \overline{\mathcal{M}}_T$ is subdivided in boundary faces of $ \overline{\mathcal{M}}_T$. In fact we have:  
\begin{lemma}  \label{attach-boundary}
If $T$ has at least one edge, there is an isomorphism of Kuranishi spaces:
\begin{equation} \label{attach}
\partial_v \overline{\mathcal{M}}_T  \cong \bigsqcup_{(T'/e',e'/e')=(T,v)} \partial_{e'} \overline{\mathcal{M}}_{T'}
\end{equation} 
where the union is over all the trees $T'$ and edges $e' \in E(T')$ such that $(T'/e',e'/e')=(T,v)$.

Let $T_0$ be the tree with no edges and one vertex $v$. There is an isomorphism of Kuranishi spaces:
\begin{equation} \label{attach0}
\partial_v \overline{\mathcal{M}}_{T_0}=  \partial_e \overline{\mathcal{M}}_{T_1}  \sqcup \overline{\mathcal{M}}_{0,1} \times_M L 
\end{equation}
where $T_1$ is the graph given by two vertices connected by an edge $e$. 


\end{lemma}

\begin{proof}
The lemma is immediate from the definition. Consider first (\ref{attach0}) and let $T_1$ be the graph given by two vertices $w,z$ connected by an edge $e$. 
By definition we have
$$ \partial_{e} \overline{\mathcal{M}}_{T_1}= ((\overline{\mathcal{M}}_w \times \overline{\mathcal{M}}_z) \times_{L^2} \Delta)  / \Z_2 .$$
and equation (\ref{bubbling0})
Where the fiber products are made using the evaluation map as usual. Therefore formula (\ref{attach0}) follows from  
$$  (\mathcal{M}_1 \times_L \mathcal{M}_1) = (\mathcal{M}_1 \times \mathcal{M}_1) \times_{L^2} \Delta $$ 
In the same way we can prove (\ref{attach}) using (\ref{bubbling}).
\end{proof}

In order to deal with the last term in relation (\ref{attach0}) we need to consider an extra Kuranishi space. 
We \emph{assume that the homology class of $L$ in $M$ is trivial}. Let $B \in C_4(M)$ be a singular chain such that $\partial B =L$. 
For a fixed $B$ we can consider the fiber product
\begin{equation} \label{alternative}
\overline{\mathcal{M}}_{0,1} \times_M B 
\end{equation}
as Kuranishi space.
The boundary of the space (\ref{alternative}) can be identified with the last term of relation (\ref{attach0})
\begin{equation} \label{spherebubbling}
\partial (\overline{\mathcal{M}}_{0,1} \times_M B ) \cong \overline{\mathcal{M}}_{0,1} \times_M L .
\end{equation}

\subsection{Orientation}


Let $T \in \mathcal{T}$ be a decorated tree.
Define
$$ \mathfrak{o}_T = ( \otimes_{e \in E(T)} \mathfrak{o}_e ) \otimes \mathfrak{o}_{\text{ex}} $$
where
$$ \mathfrak{o}_e = \{ \text{set of orientations of the edge $e$} \} \cong \Z_2 $$ 
and
$$\mathfrak{o}_{\text{ex}} = \{ \text{set of parities of the ordering of the external edges of $T$} \} \cong \Z_2 . $$ 


For each vertex $v \in V(T)$, let $\mathcal{M}_{0,v}$ be a copy of the moduli space of disks without punctures in the homology class $A_v$.



 
We can identify $\mathcal{M}_T$ with an open subset of 
$$ \prod_{v \in V(T)} \mathcal{M}_{0,v} \times \prod_{e \in E(T)} \partial D_e  $$
where $\partial D_e$ is the boundary of the disk from where the edge $e$ starts.  
It follows that an orientation of the moduli space of disks induces an orientation of $\mathcal{M}_T$ with twisted coefficients $\mathfrak{o}_T$

Now, \emph{assume that $L$ is oriented and spin}. 
By Section $44$ of \cite{FO3}, $\mathcal{M}_{0,v}$ has a natural orientation. Therefore we have:
\begin{lemma}
There exists a natural orientation of $\mathcal{M}_T$ with twisted coefficients $\mathfrak{o}_T$. 
\end{lemma}

Now we want to study how the isomorphisms of Lemma \ref{attach-boundary} are related to this orientation.  
\begin{lemma}
For each internal edge $e$, the Kuranishi space $\partial_e \overline{\mathcal{M}}_T  $ have a natural orientation with twisted coefficients in $\mathfrak{o}_{T/e}$.
\end{lemma}
\begin{proof}
By definition (\ref{bedge}) an orientation of the normal bundle $N_*\Delta$ of $\Delta$ in $L \times L$ induces an orientation of $\partial_e \overline{\mathcal{M}}_T  $ with twisted coefficients $\mathfrak{o}_T$ 
according to the isomorphism
$$ T_*(\overline{\mathcal{M}}_T) = T_*(\partial_e \overline{\mathcal{M}}_T)  \times  N_*\Delta .$$
Since $L$ is oriented, an orientation of $e$ induces an orientation of $N_*\Delta$. In other words $N_*\Delta$ has a natural orientation with twisted coefficients $\mathfrak{o}_e$.
The Lemma follows.
\end{proof}

Endow $\partial_v \overline{\mathcal{M}}_T $ with the orientation induced as boundary of $\mathcal{M}_T$.
\begin{lemma} \label{opposite}
(\ref{attach}) and (\ref{attach0}) are isomorphisms of oriented Kuranishi spaces.
\end{lemma}
\begin{proof}
Consider first the case that the graph $T=T_0$ is the graph given by just one vertex $v$. $T'=T_1$ is given by two vertices connected by an edge $e$. 

From Lemma \ref{compare-orientation} with $X_1 = X_2 = \overline{\mathcal{M}}_1$,  $B=L$ and $f_1=f_2$ be the evaluation map in the boundary marked point, we have
$$  \overline{\mathcal{M}}_1 \times_L \overline{\mathcal{M}}_1=  ( \overline{\mathcal{M}}_1 \times \overline{\mathcal{M}}_1 )\cap (\text{ev})^{-1}(\Delta) $$
as oriented Kuranishi spaces.
By definition we have that 
$$\partial_e \overline{\mathcal{M}}_{T_1} =  (( \overline{\mathcal{M}}_1 \times \overline{\mathcal{M}}_1 )\cap (\text{ev})^{-1}(\Delta) )/ \Z_2 $$
and by Proposition $8.3.3$ of \cite{FO3} we have that 
$$\partial_v \overline{\mathcal{M}}_{T_0} = ( \overline{\mathcal{M}}_1 \times_L \overline{\mathcal{M}}_1 )/ \Z_2 . $$
as oriented Kuranishi spaces.

The proof for general trees can be reconducted to this case.
\end{proof}

\begin{lemma} \label{compare-orientation}
Let $X_1$ and $X_2$ be oriented Kuranishi spaces. $f_1 : X_1 \rightarrow B$ and $f_2 : X_2 \rightarrow B$ weakly submersive. 
There exists an isomorphism of oriented Kuranishi spaces
$$   X_1 \times X_2 \cap (f_1 \times f_2)^{-1}(\Delta) =   (-1)^{\text{dim} B (\text{dim}X_1 -  \text{dim} B)} X_1 \times_B X_2$$
where $\Delta \subset B \times B$ is the diagonal.
\end{lemma}
\begin{proof}
We use the notation of Paragraph $8.2$ of \cite{FO3}. For $i=1,2$, $X_i$ is a space with Kuranishi structure $(s_i; E_i \rightarrow U_i)$. 
By definition (see Paragraph $8.2$ of \cite{FO3}) we have 
$$T_*(U_1 \times U_2) = (-1)^{\text{rank} E_2 (\text{dim}U_1 -  \text{rank} E_1)} (E_1 \oplus E_2)_* \times T_*(X_1 \times X_2)$$
$$T_*(U_1 \times_B U_2) = (-1)^{\text{rank} E_2 (\text{dim}X_1 -  \text{dim} B)} (E_1 \oplus E_2)_* \times T_*(X_1 \times_B X_2) .$$
From the relations 
\begin{eqnarray*}
TU_1 \times TU_2 &= TU_1^{\circ} \times T_*B \times T_*B  \times^{\circ} TU_2 = TU_1^{\circ} \times T_* \Delta \times N_*\Delta  \times^{\circ} TU_2= \\
& (-1)^{\text{dim} B (\text{dim}U_2 -  \text{dim} B)}  TU_1^{\circ} \times T_* \Delta \times^{\circ} TU_2  \times N_*\Delta = \\ 
& (-1)^{\text{dim} B (\text{dim}U_2 -  \text{dim} B)}  T_*(U_1 \times_B U_2)  \times N_*\Delta .
\end{eqnarray*}
it follows that
$$  T_*(X_1 \times X_2) =  (-1)^{\text{dim} B (\text{dim}X_1 -  \text{dim} B)}  T_*(X_1 \times_B X_2) \times   N_*\Delta .$$
\end{proof}

\section{Systems of singular chains}
In this section we introduce the algebraic topological tools necessary for the defition of open Gromov-Witten invariants.

For each decorated tree $T$, define $L_T$ as the orbifold
$$ L_T = L^{H(T)} / \text{Aut}(T) .$$
For each internal edge $e \in E(T)$, denote by $\Delta_e$ the big diagonal corresponding to $e$, 
that is the preimage of the diagonal of $L \times L $ under the projection $ L^{H(T)} \rightarrow L^2 $ associated to the edge $e$. 

\begin{definition} \label{def-system}
A system of singular chains 
$$W_{\mathcal{T}} = \{  W_T  \}_{T \in \mathcal{T}} $$ 
is a collection of singular chains
$W_T \in C_{|E(T)|}( L_T , \mathfrak{o}_T) $
with twisted coefficients in $\mathfrak{o}_T$.   

Fulfilling the following properties:
\begin{itemize}
\item[(a)] For each $T \in \mathcal{T}$, and $e$ an internal edge of $T$, $W_T$ intersects $\Delta_e$ transversely. 

Let $\partial_e W_T$ be the singular chain on $L_{T/e}$ defined by  
$$ \partial_e W_T = W_T \cap \Delta_e \in C_*(L_{T/e},\mathfrak{o}_{T/e} ) .$$ 

\item[(b)] The following identity holds as currents  
\begin{equation} \label{boundary-collection}
\partial W_T = \sum_{T'/e=T} \partial_{e'} W_{T'} .
\end{equation}
Here the sum is taken over all the pairs $(T',e')$ with $e' \in E(T')$ such that $T'/e' \cong T$.
\end{itemize}
\end{definition}


We also need to consider homotopies of systems of singular chains and equivalences of homotopies.

\begin{definition} A homotopy $Y_{\mathcal{T}}$ between $W_{\mathcal{T}}$ and $W_{\mathcal{T}}'$ is a collection of singular chains $\{ Y_T \}_{T \in \mathcal{T}}$
$$Y_T \in C_{|E(T)|+1}([0,1] \times L_T , \mathfrak{o}_T )$$
fulfilling the following properties:
\begin{itemize}
\item[(a)] For each $T \in \mathcal{T}$, and $e$ an internal edge of $T$, $Y_T$ intersects transversely $[0,1] \times \Delta_e$. 
In particular $ Y_T \cap [0,1] \times \Delta_e$ defines a singular chain on $[0,1] \times L_{T/e}$ 
$$ \partial_e Y_T = Y_T \cap ([0,1] \times \Delta_e ) \in C_*([0,1] \times L_{T/e}, \mathfrak{o}_{T/e}) .$$ 
\item[(b)] The following identity holds as currents  
\begin{equation} \label{boundary-homotopy}
\partial Y_T = \sum_{T'/e'=T} \partial_{e'} Y_{T'} + \{ 0 \} \times W_T - \{ 1 \} \times W_T'
\end{equation}
where the sum is over all the pairs $(T',e')$ with $e' \in E(T')$ and such that $T'/e' \cong T$.
\end{itemize}

Two homotopies $Y_{\mathcal{T}}$ and $X_{\mathcal{T}}$ between $W_{\mathcal{T}}$ and $W_{\mathcal{T}}'$ are called equivalent if there exists a collection of singular chains $\{ Z_T \}_{T \in \mathcal{T}}$
$$Z_T \in C_{|E(T)|+2}([0,1]^2 \times L_T ,\mathfrak{o}_T )$$
fulfilling the following properties:
\begin{itemize}
\item[(a)] For each $T \in \mathcal{T}$, and $e$ an internal edge of $T$, $Z_T$ intersects $[0,1]^2 \times \Delta_e$ transversely. 
In particular $ Z_T \cap ([0,1]^2 \times \Delta_e)$ defines a singular chain on $[0,1]^2 \times L_{T/e}$ 
$$ \partial_e Z_T = Z_T \cap ([0,1]^2 \times \Delta_e) \in C_*([0,1]^2 \times L_{T/e},  \mathfrak{o}_{T/e}) .$$ 
\item[(b)] The following identity holds as currents  
\begin{equation} \label{boundary-equivalence}
\partial Z_T = \sum_{T'/e'=T} \partial_{e'} Z_{T'} + [0,1] \times \{ 0 \} \times W_T - [0,1] \times \{ 1 \} \times W_T' + \{ 0 \} \times Y_T - \{ 1 \} \times X_T
\end{equation}
where the sum is over all the pairs $(T',e')$ with $e' \in E(T')$ and such that $T'/e' \cong T$.
\end{itemize}
\end{definition}

\section{Invariance}

\subsection{Perturbation of the Kuranishi structure}

Let $\mathfrak{s}$ be a perturbation of the Kuranishi structure $s$ of the spaces 
$ \{ \overline{\mathcal{M}}_T \}_{T \in \mathcal{T}}$ and $\overline{\mathcal{M}}_{0,1} \times_M B $.

We consider perturbations fulfilling the following condiction 
\begin{condition} \label{compatibility}
$\mathfrak{s}$ is transversal to the zero section in each $\overline{\mathcal{M}}_T$ and on each stratum of the boundary. 
$\mathfrak{s}$ is compatible with the isomorphism (\ref{attach}), that is
$$ \bigsqcup_{T',e'} \mathfrak{s} |_{\partial_{e'} \overline{\mathcal{M}}_{T'}} = \mathfrak{s} |_{\partial \overline{\mathcal{M}}_{T}}  $$
for each decorated tree $T$.
In the special case that $T=T_0$ is the tree with no edges, we impose
$$  \mathfrak{s} |_{\partial_{e} \overline{\mathcal{M}}_{T_1}} \sqcup \mathfrak{s} |_{\partial (\overline{\mathcal{M}}_{0,1} \times_M B )} =
 \mathfrak{s} |_{\partial \overline{\mathcal{M}}_{T_0}}  .$$
according to isomorphisms (\ref{attach0}) and (\ref{spherebubbling}),

\end{condition}

Using the standard machinery developed in Section $6$ of \cite{FO} or Appendix A of \cite{FO3} 
it is possible to prove that there exist a perturbation $\mathfrak{s}$ satisfying condition \ref{compatibility} 
as close as we want in the $C^0$ topology to the starting $s$ (see Lemma \ref{trasverse} below). 

Let $\mathfrak{s}$ be any such perturbation. 

There exists a natural strongly continuous map 
\begin{equation} \label{strong}
\text{ev}:\overline{\mathcal{M}}_T(J) \rightarrow L_T .
\end{equation}
As in formula $(6.10)$ of \cite{FO}, the strongly continuous map (\ref{strong}) defines a singular chain in $L_T$  
\begin{equation} \label{chain}
W_T = \text{ev}_* ((\mathfrak{s}|_{\overline{\mathcal{M}}_T}^{-1})(0))
\end{equation}
with twisted coefficients in $\mathfrak{o}_T$. 

If $T=T_0$ is the tree with one vertex and no edge there is an extra term in the formula (\ref{chain}): 
\begin{equation} \label{chain0}
W_{T_0} =\text{ev}_* ((\mathfrak{s}|_{ \overline{\mathcal{M}}_{T_0}}^{-1})(0)) +  \text{ev}_* ((\mathfrak{s}|_{\overline{\mathcal{M}}_{0,1} \times_M B}^{-1})(0)) \in \Q.
\end{equation}
The target of the map $\text{ev}$ in formula (\ref{chain0}) is a point. 

\begin{proposition}
Formulas (\ref{chain}) and (\ref{chain0}) define a system of chains.
\end{proposition}
\begin{proof}
Condition $(a)$ of Definition \ref{def-system} follows from the trasversality of $\mathfrak{s}$ along the spaces $\partial_{e} \overline{\mathcal{M}}_{T}$. 
Condition $(b)$ follows from Lemma \ref{opposite} and the following identities:
$$ \partial W_T =  \text{ev}_*(\mathfrak{s} |_{\partial \overline{\mathcal{M}}_{T}}^{-1}(0)) $$
$$ \partial_e W_T = \text{ev}_*(\mathfrak{s} |_{\partial_{e} \overline{\mathcal{M}}_{T}}^{-1}(0)) .$$
\end{proof}
\subsection{Invariance}

The Kuranishi structure we constructed depends on the various choices we made. However we have the following (the proof is minor adaptation of the proof of Theorem $17.11$ of \cite{FO}):  

\begin{theorem} \label{invariance}
The system of singular chains $W_{\mathcal{T}}$ depends on the almost complex structure $J$ and various choice we made to define a Kuranishi structure. 
Different choices lead to systems of singular chains that are homotopic, with homotopy determined up to equivalence.
\end{theorem}

\begin{proof} Let $J$ and $J'$ be two different complex structures compatible with the symplectic structure $\omega$. Let $J_s$ be a family of compatible almost complex structures such that $J_s=J$ for $s \in [0,\varepsilon]$ and $J_s=J'$ for $s \in [1-\varepsilon,1]$. Define
$$\overline{\mathcal{M}}_{\mathcal{T}}(J_{\text{para}}) = \cup_{s \in [0,1]} \{ s \} \times \overline{\mathcal{M}}_{\mathcal{T}}(J_s) .$$
As in Theorem $17.11$ of \cite{FO} we can endow $\overline{\mathcal{M}}_{\mathcal{T}}(J_{\text{para}})$ with a compact and Hausdorff topology. Moreover there exists a Kuranishi structure on $\overline{\mathcal{M}}_{\mathcal{T}}(J_{\text{para}})$ that extends the Kuranishi structure of $\overline{\mathcal{M}}_T(J) $ and $\overline{\mathcal{M}}_T(J') $. 

Lemma \ref{trasverse} implies that there exist a transverse multisection $\mathfrak{s}$ as close as we want to $s$ in the $C^0$ topology, satisfying Condition \ref{compatibility}. 
Moreover $\mathfrak{s}$ can be taken to extend assigned perturbations of $\overline{\mathcal{M}}_{\mathcal{T}}(J) $ and $\overline{\mathcal{M}}_{\mathcal{T}}(J') $.

As in formulas  (\ref{chain}) and (\ref{chain0}), $\mathfrak{s}$ defines a singular chain $Y_T $ in $[0,1] \times L_T $ for each tree $T$.
Lemma $4.7$ of \cite{FO} implies that $\partial Y_T$ is the intersection of $Y_T$ with the boundary of $\overline{\mathcal{M}}_T(J_{\text{para}}) $:
$$ \partial Y_T = \text{ev}_* ((\mathfrak{s}|_{\partial \overline{\mathcal{M}}_T(J_{\text{para}} )}^{-1})(0))$$
The system of chains $Y_{\mathcal{T}} = \{ Y_T  \}_{T \in \mathcal{T}}$ defines a homotopy between the system of chains $ W_{\mathcal{T}} $ and $ W_{\mathcal{T}}' $. 

\end{proof}  

\begin{lemma} \label{trasverse}
There exists a perturbation $\mathfrak{s}$ of the Kuranishi structure of $\overline{\mathcal{M}}_T(J ) $ satisfying condition \ref{compatibility}.
\end{lemma}
\begin{proof}
The construction of the multi-sections is done using the standard machinery of Kuranishi structures developed in \cite{FO} or Appendix A of \cite{FO3}.  

We will proceed by induction on the number of internal edges of the tree. In each step we impose condition \ref{compatibility}. 


Suppose first that $T$ is a tree with no internal edges (that is a vertex with some external edge). 
In this case $\overline{\mathcal{M}}_T$ is the usual space of disks with boundary punctures. Here we can just pick a multi-section $\mathfrak{s}$ transverse to the zero section. 

Now consider a tree $T$ with at least one internal edge and assume that  $\mathfrak{s}$ has been constructed for all the trees with fewer internal edges than $T$. 
For each internal edge $e \in E(T)$, condition \ref{compatibility} defines a multi-section on $ \partial_e \overline{\mathcal{M}}_T$ in terms of the multi-section of $ \partial_v \overline{\mathcal{M}}_{T/e}$ (where $v$ is the vertex of $T/e$ corresponding to the edge $e$).
By induction these multi-sections are compatible on the intersection. 
In fact (for $e_1, e_2 \in E(T)$ be two internal edges) the multi-section on $ \partial_{e_1} \overline{\mathcal{M}}_T$ and on $ \partial_{e_2} \overline{\mathcal{M}}_T$ restricted to $ \partial_{e_1,e_2} \overline{\mathcal{M}}_T$ are both defined from the multi-section on $ \partial_{v_1,v_2} \overline{\mathcal{M}}_{T/ \{e_1,e_2 \}}$ (a codimension two corner). 


We have therefore defined a multi-section on $\sqcup_e \partial_e \overline{\mathcal{M}}_T$. 
Lemma 17.4 of \cite{FO} or Theorem A$1.23$ of \cite{FO3} can be used to extend this multi-section to a transverse multi-section of $\overline{\mathcal{M}}_T $.
\end{proof}

\section{Gluing property}

Let $\mathcal{T}^{k,l}$ be the set of decorated trees with $k$ internal marked edges and $l$ external marked edges. We denote also $ \mathcal{T}^k  = \mathcal{T}^{k,0}$.
Observe that
\begin{equation} \label{cut}
\mathcal{T}^1 = (\mathcal{T}^{0,1} \times \mathcal{T}^{0,1}) / \Z_2 
\end{equation}
To the element $(T,e_0) \in \mathcal{T}^1$ corresponds the pair $(T_1,e_1), (T_2,e_2) \in \mathcal{T}^{0,1}$, 
where $T_1$ and $T_2$ are the trees made cutting the edge $e$ in two edges $e_1$ and $e_2$.

\begin{definition} \label{def-system1}
A system of singular chains with one marked edge is a collection of singular chains
$$W_{\mathcal{T}^1} = \{  W_{(T,e_0)}  \}_{(T,e_0)\in \mathcal{T}^1} $$ 
with
$W_T \in C_{|E(T)|}( L_T , \mathfrak{o}_T) $ 
with the following properties:
\begin{itemize}
\item[(a)] For each $(T,e_0) \in \mathcal{T}^1$, and $e$ internal edge of $T$ with $e \neq e_0$, $W_T$ intersects transversely $\Delta_e$. 
In particular $  W_{(T,e_0)} \cap \Delta_e$ defines a singular chain on $L_{T/e}$ 
$$ \partial_e  W_{(T,e_0)} = W_{(T,e_0)} \cap \Delta_e \in C_*(L_{T/e},\mathfrak{o}_{T/e}) .$$ 
\item[(b)] The following identity holds as currents  
\begin{equation} \label{boundary-collection1}
\partial W_{(T,e_0)} = \sum_{(T'/e',e_0')=(T,e_0)} \partial_{e'} W_{(T',e_0')} .
\end{equation}
Here the sum is over all the triples $T',e_0',e'$, with $e'$ internal edge of $T'$ such that $e' \neq e_0'$ and $(T'/e',e_0') \cong (T,e_0)$.
\end{itemize}
\end{definition}

The definitions of homotopy of chains and equivalence of homotopy can be extended in the same way. 

Observe that using the maps $\mathcal{T}^1 \rightarrow \mathcal{T}$ and $\mathcal{T}^{0,1} \rightarrow \mathcal{T}$ that forget the marked edge, 
a system of chains $W_{\mathcal{T}}$ can be lifted in a natural way to a system of chains $W_{\mathcal{T}^1}$ and $W_{\mathcal{T}^{0,1}}$.

Now, we would like define the notion of gluing property of a system of chain $W_{\mathcal{T}}$. In principle we would like that the identity 
\begin{equation} \label{strong-gluing}
W_{\mathcal{T}^1} = W_{\mathcal{T}^{0,1}} \times W_{\mathcal{T}^{0,1}} 
\end{equation}
holds. However it is easy to see that (\ref{strong-gluing}) is inconsistent with the transversality along diagonals of a system of chains. 
Instead now we define the notion of gluing property up to homotopy and prove that it holds for the system of chains we are interested in. 

We replace formula (\ref{strong-gluing}) with the weaker assumption that there exists a homotopy $Y_{\mathcal{T}^1}$ between the right and left side of (\ref{strong-gluing}). 
As usual we want that $Y_{\mathcal{T}^1}$ is uniquely determinated up to homotopy.
We also need to assure that if we made two cuts in the tree, the result is independent of the order of the cut. More precisely we have: 
\begin{definition} \label{gluing-hom}
$W_{\mathcal{T}}$ has the the gluing property up to homotopy if it is assigned an equivalence class of homotopies $Y_{\mathcal{T}^1}$ between $W_{\mathcal{T}^1}$ and $W_{\mathcal{T}^{0,1}} \times W_{\mathcal{T}^{0,1}}$ 
such that the homotopy between $W_{\mathcal{T}^2}$ and $W_{\mathcal{T}^{0,1}} \times W_{\mathcal{T}^{0,2}} \times W_{\mathcal{T}^{0,1}}$
\begin{equation} \label{YY-comp}
Y_{\mathcal{T}^2} \circ (W_{\mathcal{T}^{0,1}} \times Y_{\mathcal{T}^{1,1}})
\end{equation}
is invariant (up to equivalence) under the $\Z_2$ action that switch order of the marked edges.
\end{definition}
In (\ref{YY-comp}), $\mathcal{T}^2$ is the set of decorated trees with two marked ordered internal edges, 
$Y_{\mathcal{T}^2}$ is the lift of $Y_{\mathcal{T}^1}$ to an homotopy on $\mathcal{T}^2$, 
that is an homotopy between $W_{\mathcal{T}^2}$ and $W_{\mathcal{T}^{0,1}} \times W_{\mathcal{T}^{1,1}} $.

Let $W_{\mathcal{T}}$ be a system of chains defined as in formula (\ref{chain}) . 
Recall that two different perturbations lead to homotopic systems of chains with homotopy determined up to equivalence. 

\begin{proposition} \label{gluing-lemma}
$W_{\mathcal{T}}$ has the gluing property up to homotopy.
\end{proposition}
\begin{proof}

Let $ \mathfrak{s}_1 $ and $\mathfrak{s}_{0,1}$ be the lifting of $ \mathfrak{s}$ as pertubation of $\overline{\mathcal{M}}_{\mathcal{T}^1}(J) $ and 
$\overline{\mathcal{M}}_{\mathcal{T}^{0,1}}(J) $ respectively. 
Because of (\ref{cut}), there is an isomorphism
\begin{equation} \label{cutedge3}
\overline{\mathcal{M}}_{\mathcal{T}^1}  \cong ( \overline{\mathcal{M}}_{\mathcal{T}^{0,1}} \times \overline{\mathcal{M}}_{\mathcal{T}^{0,1}} ) / \Z_2
\end{equation}
 

There is also an obvious way to extend condition \ref{alternative} to $\mathcal{T}^1$. We do not impose the condition associated to the marked edge.  

There exists a pertubation $\tilde{\mathfrak{s}}_1$ of $\overline{\mathcal{M}}_{\mathcal{T}^1} \times [0,1]$ that satisfies (the one parameter version) of this condition and 
such that restricted to $\overline{\mathcal{M}}_{\mathcal{T}^1} \times \{ 1 \}$ agrees with $\mathfrak{s}_{0,1} \times \mathfrak{s}_{0,1}$ and 
restricted to $\overline{\mathcal{M}}_{\mathcal{T}} \times \{ 0 \}$ agrees with $ \mathfrak{s}_1 $.

From $\tilde{\mathfrak{s}}_1$ we can construct the homotopy $Y_{\mathcal{T}^1}$ required in Definition \ref{gluing-hom}. 
\end{proof}

\section{Linking number and Gromov-Witten invariants}

In this section we assume that \emph{each connected component of $L$ has the rational homology of a sphere}. 
We apply the formalism of the previous sections to define the open Gromov-Witten invariants.
We show that in each equivalence class of system of chains with the gluing property there exists a special element. 
This element is characterized by rational numbers. These are the open Gromov-Witten invariants.

The construction is based on generalized linking numbers. 
We extend the notion of linking number of two curves to a system of chains with gluing property up to homotopy. 

Let us start recalling the definition of linking number between curves on a $3$-sphere suitable for our proposes. 
Let $\gamma_1$ and $\gamma_2$ be two curves on $L$. We will think to $\gamma_1$ and $\gamma_2$ as $1$-chains on $L$. 
The product $\gamma_1 \times \gamma_2$ defines a $2$-chain in $L \times L$ which is homologically trivial because $H_1(L, \Q)=0$. 

Moreover the product structure of $\gamma_1 \times \gamma_2$ distinguishes a particular class of 
$3$-chain $C$ in $[0,1] \times L \times L$ transversal to the boundary such that $\partial C =\{ 0 \} \times \gamma_1 \times \gamma_2$ as follows. 
There exists $B_1, B_2 \in C_2([[0,1]\times L ])$ such that $\partial B_1 = \{ 0 \} \times \gamma_1$ and $\partial B_2 =\{ 0 \} \times \gamma_2$.
Since $H_2(L , \Q)=0$, $B_1$ and $B_2$ are unique up to equivalence.
The homotopies $C_1= B_1 \times \gamma_2$ and $C_2= \gamma_1 \times B_2$ are equivalent. Take $C$ in this equivalence class. 
The linking number between $\gamma_1$ and $\gamma_2$ is defined by the intersection number of $C$ with the diagonal. 

We now generalize the definition of linking number to a system of chains. 
For this we have to consider more general trees, where the case of two curves corresponds to the tree with two vertices joint by an edge.
Moreover we have to deal with singular chains that are not just the product of $1$-chains. 
In the case of two curves the product structure was critical to define uniquely the homotopy. 
In the general case we will use the gluing property up to homotopy.   

Denote by $\mathcal{T}_0$ the set of decorated tree with no edges (and therefore exactly one vertex).
$$ \mathcal{T}_0 = \{ T_0(A)  \}_{A \in H_2(X,L)} $$
where $T_0(A)$ is the decorated tree with no edges and with vertex in the relative homological class $A$.
\begin{proposition} \label{linking}
Let $W_{\mathcal{T}}$ be a system of chains with the gluing property up to homotopy.

There exists a uniquely determined system of chains $  W'_{\mathcal{T}} $ homotopic to $W_{\mathcal{T}}$ such that 
$$ W_T' =0 $$
for every $T \notin \mathcal{T}_0$. 
\end{proposition}
\begin{proof}

We will construct the homotopy using an iterative argument. 
In this process we provide a recipe that identifies uniquely the homotopy up to equivalence. This is essentially based on the gluing property. 

Fix $A \in H_2(X,L)$ and let $l$ be an integer. 
Assume that the homotopy has been constructed for all the trees with symplectic area strictly less than $\omega(A)$ 
or with area equal to $\omega(A)$ and number of external edges less than $l$.

Let $T_{max} \in \mathcal{T}(A)$ be a decorated tree in the homology class $A$ with $l$ external edges and maximal number of internal edges. 
Formula (\ref{boundary-collection}) implies $\partial W_{T_{max}} =0$. 
Using the gluing property first and the fact that $H_1(L,\Q)=0$ after we can construct an homotopy 
$Y_{T_{max}} \in C_*([0,1] \times L_{T_{max}})$ between $W_{T_{max}}$ and the zero singular chain. $Y_{T_{max}}$ uniquely up to equivalence  
since $H_2(L,\Q)=0$ and $Y_{T_{max}}$ has been constructed using the gluing property.  
Lemma \ref{contracting} implies that we can extend $Y_{T_{max}}$ to an homotopy of system of chains. The resulting system of chain has $T_{max}$-component equal to zero.  


We now iterate the argument. Consider a tree $T_n$ with $l$ external edges and suppose that 
$Y_T$ has been constructed for all the trees $T$ with more internal edges than $T_n$ such that (\ref{boundary-homotopy}) holds with $W_T'=0$. 
Assume also that $Y_T$ has been constructed using the recipe from the gluing property as above. 

For each vertex $v \in V(T_n)$, let $\mathcal{T}_v$ be the set of trees with homological class $A_v$ and external edges $H(v)$.
Let $\mathcal{T}_{T_n}$ be the set trees $T$ made from $T_n$ replacing each vertex $v$ of $T_n$ by an element of $\mathcal{T}_v$. That is
$$  \mathcal{T}_{T_n} = (\prod_{v \in V(T_n)} \mathcal{T}_v) / \text{Aut}(T_n)  $$
By induction on each $\mathcal{T}_v$ we have an homotopy to the zero system of chain.
The product of these homotopies defines an homotopy on $\mathcal{T}_{T_n}$ to the zero system of chain.
On the other hand on  $\mathcal{T}_{T_n} \setminus \{ T_n \}$ we have the homotopy contructed before $Y_T$. 
Since both these homotopies have been contructed using the recipe from the gluing property, they have to be equivalent.
This determines an homotopy between $W_{T_n}$ and the zero sistem of chain.

By iterative argument we construct $  W'_{\mathcal{T}} $ as in the proposition. 
The unicity follows since the homotopy $  Y_{\mathcal{T}} $ is determinated up to equivalence.




\end{proof}

\begin{lemma} \label{contracting} Let $W_{\mathcal{T}}$ be as system of chains. 
Assume that for each tree $T $ with $|E(T)| >l$ is assigned a singular chains $ W_T' \in C_{|E(T)|}([0,1] \times L_T , \mathfrak{o}_T )$ and 
$ Y_T \in C_{|E(T)|+1}([0,1] \times L_T , \mathfrak{o}_T )$ such that formula (\ref{boundary-homotopy}) holds.
Then the set $\{ W_T' \}$ can be completed to a system of chains $W_{\mathcal{T}}'$ such that
 the set $\{ Y_T \}$ can be completed to a homotopy $Y_{\mathcal{T}}$ between $W_{\mathcal{T}}$ and $W_{\mathcal{T}}'$. 
Moreover the homotopy $Y_{\mathcal{T}}$ can be taken unique up to equivalence.  
\end{lemma}
\begin{proof}
We construct $Y_{\mathcal{T}}$ by iteration starting from the trees with maximal number of edges with fixed number of external edges. 
Let $T$ be a tree with $l$ internal edges. Define 
$$F_T=\sum_{T'/e'=T}{\partial_{e'} Y_{T'}}$$
where the sum is over all the pairs $(T,e')$ such that $T'/e'=T$. 
Let $W'_T$ be the image of $F_T$ under the projection $[0,1] \times L_T \rightarrow L_T$. 
There exists a singular chain $K_T$ such that 
\begin{equation} \label{K}
\partial K_T = F_T - \{ 1 \} \times W'_T .
\end{equation}
In fact the quivalence class of $K_T$ such that (\ref{K}) holds can be uniquely determinated in the following way.
Write $   F_T = \sum_a \rho_a f_a $ for $\rho_a \in \Q$ and $ f_a= (t_a, g_a) : \Delta \rightarrow [0,1] \times L_T$. 
We put $K_T= \sum_a \rho_a \tilde{f}_a$ where 
$$  \tilde{f}_a : [0,1] \times \Delta \rightarrow [0,1] \times L_T $$
$$  \tilde{f}_a(t,x) = ((1-t) t_a(x) +t, g_a(x))  .$$

Assume also $K_T$ transversal to all the thick diagonals of $L_T$.
Define
$$ Y_T = [0,1] \times W_T + K_T .$$
With this choice formula (\ref{boundary-homotopy}) holds. 
\end{proof}

Proposition \ref{linking} allows us to define the linking number of $W_{\mathcal{T}(A)}$ by
$$\text{link}(W_{\mathcal{T}(A)}) =  W_{T_0(A)}' \in \Q .$$
Observe that $L_{T_0(A)} $ is just a point. 

\begin{theorem} Let $W_{\mathcal{T}}$ the system of chains (\ref{chain}). 
The rational number $\text{link}(W_{\mathcal{T}(A)})$ is defined and it does not depend on the almost complex structure and various choices we made to define a Kuranishi structure.  
\end{theorem}
\begin{proof}
By Lemma \ref{gluing-lemma}, $W_{\mathcal{T}}$ has the gluing property up to homotopy. 
Therefore we can apply Proposition \ref{linking} to define $\text{link}(W_{\mathcal{T}(A)})$.
Theorem \ref{invariance} implies that this rational number does not depend on any choice we made.
\end{proof}
We can now define the Open Gromov-Witten invariants that count multi-disks in the relative homology class $A \in H_2(X,L)$ as
$$ F_{0,1}(A) = \text{link}(W_{\mathcal{T}(A)}) $$

It also easy to see how $F_{0,1}(A)$ depends on the singular chain $B$ that we have used to define (\ref{alternative}). Assume that $B' \in C_4(M)$ is another singular chain with $\partial B'= L$. Since $B - B'$ is a cycle we have
\begin{equation} \label{change}
F_{0,1}(A) - F_{0,1}'(A) = \text{ev}_* ([\overline{\mathcal{M}}_{0,1} (A)]) \cap (B - B') .
\end{equation}

\section{Higher genus}

In this section we generalize the previous analysis to higher-genus curves. 
Most of what we said for disks can be extended to higher-genus straightforwardly. Therefore we will only comment to the main differences. 

In this section we \emph{assume that $L$ is spin}. This assumption assures that all the moduli spaces we consider are oriented (see \cite{L}). 

\subsection{Multi-curves}

Let $\mathcal{M}_{(g,h),(n, \overrightarrow{m})}(A)$ be the Kuranishi space of stable maps of type $(g, h)$ with $n$ internal marked points and 
$ \overrightarrow{m}=(m_1,...,m_h)$ boundary marked points, representing the relative homology class $A \in H_2(M,L)$. This space has been studied in \cite{L}.

\begin{definition}

A decorated graph $G$ is a ribbon graph endowed with the following data

\begin{itemize}
\item A set $I(G)$. For each $i \in I(G)$ a relative homological classes $A_i \in H_2(M,L)$ and an oriented surface $ \Sigma_i$ with no internal marked points. 
Let $g_i$ be the genus, $h_i$ the number of boundary components and $\overrightarrow{m_i}$ be the number of boundary marked points of $\Sigma_i$. 
If $A_i=0$ we assume that $\Sigma_i$ is stable.
\item A one to one correspondence between vertices of $G$ and the boundary components of $ \{ \Sigma_i \}_{i \in I}$.
\item For each $v \in V(G)$, a one to one correspondence between the half edges $H(v)$ starting in $v$ and the boundary marked points of the boundary component associated to $v$.
\end{itemize}

The homology class of $G$ is given by $A = \sum_{i \in I} A_i$.
From a decorated graph $G$ we can construct a surface $\Sigma_G$ replacing each edge of $G$ with a strip respecting the orientation of the surfaces $\Sigma_i$. 
We say that the decorated graph $G$ is connected if the surfaces $\Sigma_G$ is connected. 
\end{definition}

Denote by $\mathcal{G}_{(g,h),\overrightarrow{m}}(A)$ the set of connected decorated graphs $G$ where $\Sigma_G$ has genus $g$, $h$ boundary components, 
$\overrightarrow{m}$ external edges, representing the relative homology class $A \in H_2(M,L)$.

For each $i \in I$ let $\overline{\mathcal{M}}_i$ be a copy of $ \overline{\mathcal{M}}_{(g_i, h_i), (0, \overrightarrow{m}_i)} (A_i)$.

The moduli space of multi-curves $\overline{\mathcal{M}}_G$ is defined as: 
$$ \overline{\mathcal{M}}_G =  \left( \prod_{i \in I} \overline{\mathcal{M}}_i \right) / \text{Aut}(G) . $$

Since we assume that $L$ is spin, every $\overline{\mathcal{M}}_i$ has a natural orienation. 
This implies that $ \overline{\mathcal{M}}_G $ has a natural orientation with twisted coefficients in $\mathfrak{o}_G$.

Let $L_G$ be the orbifold
$$ L_G = L^{H(G)}/ \text{Aut}(G) .$$

The evaluation map on the punctures defines a natural map 
\begin{equation} \label{higher-ev}
\text{ev} : \overline{\mathcal{M}}_G \rightarrow L_G 
\end{equation}

\subsection{Boundary}

In \cite{L} is proved that the boundary $ \overline{\mathcal{M}}_{(g, h), (n, \overrightarrow{m})}$ can be subdivided in components of type $E$, $H1$, $H2$, $H3$. 

Let $G$ be a decorated graph. 

The boundary faces of $\overline{\mathcal{M}}_G$ correspond to the boundary faces of the component surfaces $I(G)$. 
As in the case of multi-disks, we want to identify these boundary faces with subspaces associated to other graphs. 
 
For each internal edge $e \in E(G)$ define
$$ \partial_e \overline{\mathcal{M}}_G = ( \text{ev}_e^{-1}(\Delta)) /  \text{Aut}(G,e) $$
where $\text{ev}_e : \prod_{i \in I(G)} \overline{\mathcal{M}}_i \rightarrow  L \times L $ is the evaluation map associated to the edge $e$.

\begin{definition} \label{higher-contracting}
For $e \in E(G)$ an internal edge of $G$.
Let $v$, $w$ be the vertices attached to $e$. Let $i_v, i_w \in I(G)$ be the associated surfaces with boundary components $v$ and $w$ respectively.   
The decorated graph $G/e$ is defined replacing the surfaces $\Sigma_{i_v}$ and $\Sigma_{i_w}$ by the surface made by gluing the boundary marked points associated to $e$ and smoothing the resulting node.
\end{definition}

Observe that in the preview definition we can have $i_v=i_w $ or even $v =w$. The boundary node that we smooth is of type  
\begin{itemize}
\item H1 if $v= w$,
\item H2 if $v \neq w$ but $i_v = i_w$,
\item H3 if $i_v \neq i_w$ .
\end{itemize}



\begin{definition} \label{higher-contracting0}
Let $V_0(G)$ be the set of vertices of $G$ with valence zero. Each $v \in V_0(G)$ correspond to a boundary component of some $\Sigma_{i_v}$ ($i \in I(G)$) without marked points. 
Let $\Sigma_{i_v'}$ be the surface that we get from $\Sigma_{i_v}$ shrinking the boundary component associated to $v$ to an internal marked point $*$.
Let $ \overline{\mathcal{M}}_{G/v} $ be the associated moduli space of multi-cuves:
$$ \overline{\mathcal{M}}_{G/v} =  \left(  \overline{\mathcal{M}}_{i_v'} \times \prod_{i \neq i_v} \overline{\mathcal{M}}_{i}  \right) / \text{Aut}(G,v) $$

\end{definition}



As in the case of multi disks, the Kuranishi spaces $ \overline{\mathcal{M}}_G$ as a natural orientation with twisted coefficients $\mathfrak{o}_G)$.  
With this notation we can formulate the generalization of Lemma \ref{attach-boundary}:  
\begin{lemma} 
For each decorated graph $G$ the following is an isomorphism of oriented Kuranishi spaces: 
\begin{equation} \label{higher-attach}
\partial \overline{\mathcal{M}}_G   \cong (\bigsqcup_{G'/e'=G} \partial_{e'} \overline{\mathcal{M}}_{G'}) \sqcup ( \bigsqcup_{v \in V_0(G)}  \overline{\mathcal{M}}_{G/v} \times_{ev_*} L). 
\end{equation} 
\end{lemma}


\subsection{Perturbations and Systems of chains}

The definition of system of chains and homotopy between system of chains extends straightforward to decorated graphs.

Assume for the moment that the last term of (\ref{higher-attach}) is absent. See section \ref{E} for the necessary modifications.

We consider pertubations $\mathfrak{s}$ of the Kuranishi structure of 
$$ \bigsqcup_{G \in \mathcal{G}} \overline{\mathcal{M}}_G $$ 
with the property
\begin{equation} \label{higher-condition}
\mathfrak{s}|_{\partial_e \overline{\mathcal{M}}_G} = \bigsqcup_{G'/e'=G} \mathfrak{s}|_{\partial_{e'} \overline{\mathcal{M}}_{G'}}.
\end{equation}

As in Lemma \ref{trasverse}, we can prove that there exists a transversal perturbation $\mathfrak{s}$ as close as we want to $s$ in the $C^0$ topology, such that (\ref{higher-condition}) holds. 
For each such $\mathfrak{s}$ we define
\begin{equation} \label{higher-chain}
W_G =  \text{ev}_* ((\mathfrak{s}|_{\overline{\mathcal{M}}_G}^{-1})(0)) \in C_*(L_G , \mathfrak{o}_G) .
\end{equation}
where $\text{ev}$ is defined in (\ref{higher-ev}).

As in the case of multi-disks we have (remember that here we neglect the last term of (\ref{higher-attach})): 
\begin{proposition} \label{higher-invariance}
$ \{ W_G \}_{G \in \mathcal{G}} $ defines a system of chains.
Different choices of perturbations satisfying condition (\ref{higher-condition}) lead to systems of chains homotopic, with homotopy uniquely determined up to equivalence.
\end{proposition}

\subsection{Boundary nodes of type E} \label{E}

We now modify the formulas above in order to deal with the last term of (\ref{higher-attach}). 
We assume that there exists a singular chain $B \in C_4(X)$ such that $\partial B =L$.

Let $E \subset V_0(G)$ be a subset of $V_0(G)$. For each $i \in I(G)$, denote by $E_i$ the intersection of $E$ with the vertices associated to boundary components of $\Sigma_i$ 
Let $\Sigma_i^{E_i} $ be the surface made by $\Sigma_i$ contracting the boundary components associated to $E_i$ to an internal marked point. 
Let  $\overline{\mathcal{M}}_i^{E_i}$ be the associated Kuranishi space.
The evaluation on the internal marked point gives a map 
\begin{equation} \label{higher-ev2}
\prod_{i \in I(G)} \overline{\mathcal{M}}_i^{E_i} \rightarrow   M^E .
\end{equation}
Define
$$  \overline{\mathcal{M}}_{G,E} =   \left( \left( \prod_{i \in I} \overline{\mathcal{M}}_i \right) \times_{M^E} \left(  B^E \right)  \right) / \text{Aut}(G) . $$
\begin{equation} \label{strong2}
\text{ev}:\overline{\mathcal{M}}_{G,E}(J) \rightarrow L_G .
\end{equation}
As in formula $(6.10)$ of \cite{FO}, the strongly continuous map (\ref{strong2}) defines a singular chain   
\begin{equation} \label{chain2}
W_G = \sum_{E} \text{ev}_* ((\mathfrak{s}|_{\overline{\mathcal{M}}_{G,E}}^{-1})(0))
\end{equation}

\subsection{Gluing property} 
Let $\mathcal{G}^{k,l}$ the set of graphs with $k$ ordered internal edges and $l$ ordered external edges. We also denote $\mathcal{G}^k=\mathcal{G}^{k,0}$.
The cut of the internal edge gives an isomorphism
$$  \mathcal{G}^1 = (  \mathcal{G}^{0,2} \sqcup \mathcal{G}^{0,1} \times \mathcal{G}^{0,1} ) / \Z_2   $$

$W_{\mathcal{G}}$ has the the gluing property up to homotopy if it is assigned an equivalence class of homotopies $W_{\mathcal{G}^1}$ between 
$W_{\mathcal{G}^1}$ and $W_{\mathcal{G}^{0,2}} \sqcup W_{\mathcal{G}^{0,1}} \times W_{\mathcal{G}^{0,1}}$ 
such that the homotopy between $W_{\mathcal{G}^2}$ and 
$W_{\mathcal{G}^{0,4}} \sqcup W_{\mathcal{G}^{0,1}} \times W_{\mathcal{G}^{0,3}} \times W_{\mathcal{G}^{0,1}} \sqcup W_{\mathcal{G}^{0,1}} \times W_{\mathcal{G}^{0,2}} \times W_{\mathcal{G}^{0,1}}$
\begin{equation} \label{YY-comp-higher}
Y_{\mathcal{G}^2} \circ (  Y_{\mathcal{G}^{1,2}}  \sqcup W_{\mathcal{G}^{0,1}} \times Y_{\mathcal{G}^{1,1}})
\end{equation}
is invariant (up to equivalence) by the $\Z_2$ action that switch of the order of the marked edges.


\begin{proposition} \label{higher-gluing-lemma}
The system of chain defined in (\ref{higher-chain}) has the gluing property up to homotopy.
\end{proposition}

\subsection{Linking numbers}
Assume that \emph{each connected component of $L$ has the rational homology of a sphere}. As in the case of disks we have: 
Denote by $\mathcal{G}_0$ the set of connected decorated graphs with no edges (and therefore exactly one surfaces component).
$$ \mathcal{G}_0 = \{ G_{g,h}(A)  \}_{A \in H_2(X,L)} $$
where $G_{g,h}(A)$ is the decorated graph with no edges and with $I(G_{g,h}(A))$ is given by a unique surface of genus $g$, $h$ boundary components and relative homological class $A$.
\begin{lemma} \label{higher-linking}
Let $W_{\mathcal{G}}$ be a system of chains with the gluing property up to homotopy.

There exists an uniquely determined system of chains $  W'_{\mathcal{G}} $ homotopic to $W_{\mathcal{G}}$ such that 
$$ W_G' =0 $$
for every $G \notin \mathcal{G}_0$. 
\end{lemma}

Define the Gromov-Witten invariant as the rational number 
$$ F_{g,h}(A)= W_{G_{g,h}(A)}' \in \Q  .$$
From Lemma \ref{higher-invariance} and Lemma \ref{higher-linking} follows:
\begin{theorem}
The rational numbers $F_{g,h}(A)$ do not dependent on the almost complex structure and various choices we made to define the Kuranishi structure. 
\end{theorem}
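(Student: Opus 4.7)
The plan is to reduce the theorem to the two lemmas just proved, in exact parallel to the genus zero case. Fix two sets of auxiliary data: almost complex structures $J, J'$ together with multi-sections $s_n, s_n'$ of the parametrized Kuranishi structures, both coherent in the boundary in the sense of Definition \ref{compatibility}. These produce two systems of singular chains $W_{\mathcal{G}_0}$ and $\widetilde{W}_{\mathcal{G}_0}$ on $\{C_G(L)\}_{G \in \mathcal{G}_0}$, and correspondingly two candidate values $F_{g,h}(A) = W'_{G_{g,h}^0(A)}$ and $\widetilde{F}_{g,h}(A) = \widetilde{W}'_{G_{g,h}^0(A)}$ obtained via the reductions provided by Lemma \ref{higher-linking}.

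First I would apply Lemma \ref{higher-invariance} to produce an explicit homotopy $Y_{\mathcal{G}_0}$ between $W_{\mathcal{G}_0}$ and $\widetilde{W}_{\mathcal{G}_0}$. Concretely this homotopy is constructed, as in the proof of Proposition \ref{invariance}, from a parametrized moduli space $\overline{\mathcal{M}}_{\mathcal{G}_0}(J_{\text{para}})$ carrying a coherent perturbation restricting to $s_n$ and $s_n'$ at the two endpoints. Second, by Lemma \ref{higher-linking} the systems $W_{\mathcal{G}_0}$ and $\widetilde{W}_{\mathcal{G}_0}$ are each homotopic to their respective reductions $W'_{\mathcal{G}_0}$ and $\widetilde{W}'_{\mathcal{G}_0}$, both of which vanish on decorated graphs with at least one edge. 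Concatenating these three homotopies along the $[0,1]$-direction yields a single homotopy $Y'$ between $W'_{\mathcal{G}_0}$ and $\widetilde{W}'_{\mathcal{G}_0}$.

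The final step is to run the reduction argument of Lemma \ref{higher-linking} once more, but now on the homotopy $Y'$ rather than on a system of chains. The recursive procedure in that lemma only uses that, because each component of $L$ has the rational homology of a sphere, every chain $W_G$ in $L^{H(G)}/\text{Aut}(G)$ with $G$ of maximal edge count is homologically trivial; the same recursive bounding applies in $[0,1] \times L^{H(G)}/\text{Aut}(G)$. Applied to $Y'$ it produces an equivalent homotopy whose components vanish on every $G$ with at least one edge. For the unique edgeless graph $G_{g,h}^0(A)$ the target $C_{G_{g,h}^0(A)}(L)$ is a point, so the relevant component is a $\Q$-valued $1$-chain on $[0,1]$ whose boundary relation (\ref{boundary-homotopy}) degenerates to $\widetilde{W}'_{G_{g,h}^0(A)} - W'_{G_{g,h}^0(A)}$, forcing $F_{g,h}(A) = \widetilde{F}_{g,h}(A)$.

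The main obstacle is the very last part: ensuring that the reduction procedure of Lemma \ref{higher-linking} lifts verbatim from systems of chains to homotopies of such systems, including compatibility with the vertex-contraction coherence (\ref{higher-attach}) and with the type-E correction from sphere bubbles attached to constant disks (which enters via the auxiliary singular chain $B$ with $\partial B = L$). This is essentially formal, but requires checking that the edge-by-edge downward induction preserves the boundary-coherence constraints on $[0,1] \times C_G(L)$ in the same way it does on $C_G(L)$; once this is granted, the theorem follows precisely as in the analogous argument at the end of Section $5$.
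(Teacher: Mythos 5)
Your proof is correct and takes essentially the same approach as the paper, which dispenses with it in one line ("From Lemma \ref{higher-invariance} and Lemma \ref{higher-linking} follows"): you simply unfold that one line by producing the homotopy from Lemma \ref{higher-invariance}, passing to the reduced systems from Lemma \ref{higher-linking}, and then re-running the edge-count induction on the resulting homotopy exactly as in the uniqueness clause of Lemma \ref{higher-linking} to force $W'_{G_{g,h}^0(A)} = \widetilde W'_{G_{g,h}^0(A)}$.
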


Observe that the rational numbers $F_{g,h}(A)$ depend not only on the pair $(X,L)$ by also on the choice of the singular chain $B$ in section \ref{E}. 
However, for different choices of $B$ the invariants can be expressed by a formula analogous to (\ref{change}).   




\end{document}